\documentclass[11pt,reqno]{amsart}
\usepackage{amsmath,amsfonts, amssymb, amsthm}
  \usepackage{graphics} 
  \usepackage{epsfig} 
\usepackage{graphicx}  \usepackage{epstopdf}
 \usepackage[colorlinks=true]{hyperref}
\hypersetup{urlcolor=blue, citecolor=red}

\newtheorem{theorem}{Theorem}[section]
\newtheorem{corollary}{Corollary}[section]

\newtheorem{lemma}[theorem]{Lemma}

\theoremstyle{definition}

\newtheorem{remark}{Remark}[section]

\def\pmod #1{\ ({\rm{mod}}\ #1)}
\def\Z{\Bbb Z}
\def\N{\Bbb N}

\def\Q{\Bbb Q}

\def\R{\Bbb R}

\def\l{\left}
\def\r{\right}
\def\bg{\bigg}
\def\({\bg(}
\def\){\bg)}
\def\t{\text}
\def\f{\frac}

\def\mo{{\rm{mod}\ }}
\def\pmod#1{\ (\mo\ #1)}

\def\ls{\leqslant}
\def\gs{\geqslant}

\def\al{\alpha}

\def\ve{\varepsilon}

\def\eq{\equiv}

\def\da{\delta}

\def\Proof{\noindent{\it Proof}}

\begin{document}
\hbox{Preprint, {\tt arXiv:2605.05200}}
\medskip

\title[On a polynomial involving quadratic residues modulo primes]
      {On a polynomial involving quadratic residues modulo primes}
\author[Zhi-Wei Sun]{Zhi-Wei Sun}


\address{School of Mathematics, Nanjing
University, Nanjing 210093, People's Republic of China}
\email{{\tt zwsun@nju.edu.cn}
\newline\indent
{\it Homepage}: {\tt http://maths.nju.edu.cn/\lower0.5ex\hbox{\~{}}zwsun}}

\keywords{Polynomials, quadratic residues modulo primes, quadratic fields, roots of unity.
\newline \indent 2020 {\it Mathematics Subject Classification}. Primary 11A15, 11C08; Secondary 11R11.
\newline \indent Supported by the Natural Science Foundation of China (grant no. 12371004).}

\begin{abstract} Let $p$ be an odd prime, and define
$$G_p(x)=\prod_{k=1}^{(p-1)/2}\left(x-e^{2\pi i k^2/p}\right).$$
In this paper we study values of $G_p(x)$ at roots of unity via Galois theory, and confirm some previous conjectures.
For example, for any primitive tenth root $\zeta$ of unity, we prove that
$$G_p(\zeta)=\begin{cases}(-1)^{|\{1\ls k\ls \frac {p+9}{10}:\ (\frac kp)=-1\}|}
&\text{if}\ p\equiv21\pmod{40},
\\(-1)^{|\{1\ls k\ls\frac {p+1}{10}:\ (\frac kp)=-1\}|}\zeta^{2}&\text{if}\ p\equiv 29\pmod{40},
\end{cases}$$
where $(\frac kp)$ denotes the Legendre symbol.
\end{abstract}
\maketitle

\section{Introduction}
\setcounter{equation}{0}
 \setcounter{conjecture}{0}
 \setcounter{theorem}{0}
 \setcounter{proposition}{0}

 Let $p$ be an odd prime, and let $(\f{\cdot}p)$ be the Legendre symbol.
 A classical result of Gauss (cf. \cite[pp.\,70-76]{IR}) states that
 $$\sum_{k=0}^{p-1}e^{2\pi i k^2/p}=\sqrt{(-1)^{(p-1)/2}p}.$$
 As in \cite{S23}, we define the polynomial
\begin{equation}\label{S-poly}G_p(x):=\prod_{k=1}^{(p-1)/2}(x-e^{2\pi ik^2/p})
=\prod_{r=1\atop (\f rp)=1}^{p-1}(x-e^{2\pi ir/p}).
\end{equation}
According to Dickson \cite[pp.\,370-371]{D99}, Dirichlet realized that
$(i-(\f 2p))G_p(i)\in\Z[\sqrt p]$ when $p\eq3\pmod4$.
The exact value of $G_p(\pm i)$ was determined by Williams \cite[Lemma 3]{W82}
in the case $p\eq3\pmod4$, and by the author \cite{S23} in the case $p\eq1\pmod4$.
The author \cite{S23} also determined $G_p(\omega)$ when $p\eq1\pmod4$, where
$$\omega=e^{2\pi i/3}=\f{-1+\sqrt3\,i}2.$$
In this paper, we continue to study values of the polynomial $G_p(x)$ at roots of unity.

Now we state our main results.

\begin{theorem} \label{Th1.1}
Let $p$ be an odd prime, and let $m>1$ be an integer with $p\nmid m$.
Let $a\in\Z$ with $0<a<m$.
If $p\eq1\pmod4$, then
\begin{equation}G_p(e^{2\pi ia/m})e^{-2\pi i\f am\cdot\f{p-1}4}(-1)^{|\{1\ls r<\f{ap}m:\ (\f rp)=1\}|}>0.
\end{equation}
When $p\eq3\pmod4$, we have
\begin{equation}G_p(e^{2\pi ia/m})ie^{-\pi i\f am\cdot\f{p-1}2}(-1)^{\f{h(-p)-1}2+|\{1\ls r<\f{ap}m:\ (\f rp)=1\}|}>0,
\end{equation}
where $h(-p)$ denotes the class number of the imaginary quadratic field $\Q(\sqrt{-p})$.
\end{theorem}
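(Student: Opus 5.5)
The plan is to compute the argument of each linear factor of $G_p(x)$ at $x=e^{i\theta}$, where $\theta=2\pi a/m$, and multiply them together. For a real $\varphi$ we have
$$e^{i\theta}-e^{i\varphi}=e^{i(\theta+\varphi)/2}\cdot 2i\sin\frac{\theta-\varphi}2 .$$
Apply this with $\varphi=\varphi_r=2\pi r/p$ for each quadratic residue $1\ls r\ls p-1$. Then $(\theta-\varphi_r)/2=\pi(a/m-r/p)$ lies in $(-\pi,\pi)$, and it is nonzero because $p\nmid m$ and $0<a<m$ give $a/m\ne r/p$. Hence $\sin\frac{\theta-\varphi_r}{2}$ is positive exactly when $r<ap/m$ and negative exactly when $r>ap/m$. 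Write $n=(p-1)/2$ and $c=|\{1\ls r<\frac{ap}m:\ (\frac rp)=1\}|$. We obtain
$$G_p(e^{i\theta})=P\cdot(-1)^{n-c}\, i^{\,n}\, e^{in\theta/2}\, e^{\pi i S/p},\qquad S=\sum_{1\ls r\ls p-1,\ (\frac rp)=1} r,$$
with $P>0$. Everything therefore reduces to evaluating $e^{\pi iS/p}$, which depends only on $S$ modulo $2p$.

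In the case $p\eq1\pmod4$, the map $r\mapsto p-r$ permutes the quadratic residues, so $S=p(p-1)/4$. Thus $i^n e^{\pi iS/p}=e^{\pi i(p-1)/4}\,e^{\pi i(p-1)/4}=e^{\pi i(p-1)/2}=1$. Also $n$ is even, so $(-1)^{n-c}=(-1)^c$. Finally $e^{in\theta/2}=e^{2\pi i\frac am\cdot\frac{p-1}4}$. Multiplying $G_p(e^{i\theta})$ by $e^{-2\pi i\frac am\cdot\frac{p-1}4}(-1)^c$ therefore leaves $P>0$, which is the first assertion.

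In the case $p\eq3\pmod4$ with $p>3$, I will use Dirichlet's class number formula in the form $\sum_{r=1}^{p-1}(\frac rp)r=-p\,h(-p)$. Combined with $\sum_{r=1}^{p-1}r=p(p-1)/2$, this gives $S=p(p-1)/4-p\,h(-p)/2$. Hence
$$i^n e^{\pi iS/p}=e^{\pi i(p-1)/2}e^{-\pi i h(-p)/2}=-(-i)^{h(-p)}=i(-1)^{(h(-p)-1)/2},$$
using that $h(-p)$ is odd. Since $n$ is odd, $(-1)^{n-c}=-(-1)^c$. Altogether
$$G_p(e^{i\theta})=P\cdot(-i)(-1)^{\frac{h(-p)-1}2+c}e^{\pi i\frac am\cdot\frac{p-1}2}.$$
Multiplying by $i\,e^{-\pi i\frac am\cdot\frac{p-1}2}(-1)^{\frac{h(-p)-1}2+c}$ gives $P>0$, as claimed.

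The case $p=3$ needs a separate check because of the extra units in $\Q(\sqrt{-3})$, which change the class number formula. Here $S=1$, $h(-3)=1$ and $n=1$, and a direct substitution into the same product formula gives the same conclusion. The only step that is more than bookkeeping is the class number input for $p\eq3\pmod4$; I will quote it from the literature.
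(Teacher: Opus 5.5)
Your argument for $p\equiv 1\pmod 4$, and for $p\equiv 3\pmod 4$ with $p>3$, is the paper's proof. It uses the same factorization $e^{i\theta}-e^{i\varphi}=e^{i(\theta+\varphi)/2}\,2i\sin\frac{\theta-\varphi}{2}$ and the same sign count through $c$. It also evaluates $S$ the same way: by $r\mapsto p-r$, resp.\ by Dirichlet's formula $\sum_{r=1}^{p-1}r\left(\frac rp\right)=-p\,h(-p)$, which is the paper's Lemma 2.1. Those computations are all correct.

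The step that fails is your final paragraph on $p=3$. Substituting $S=1$, $n=1$ into your own product formula gives $i^n e^{\pi iS/p}=ie^{\pi i/3}=e^{5\pi i/6}$. The value you need is $i(-1)^{(h(-3)-1)/2}=i$. Hence
\[
G_3(e^{i\theta})\cdot i\,e^{-i\theta/2}\,(-1)^{\frac{h(-3)-1}{2}+c}=P\,e^{\pi i/3},
\]
which is never a positive real.

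A concrete instance: take $m=2$, $a=1$. Then $G_3(-1)=-1-\omega=\omega^2$, $i e^{-\pi i/2}=1$ and $c=1$. So the quantity in the theorem equals $-\omega^2=e^{\pi i/3}$.

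The second inequality is therefore false for $p=3$, and no separate check can establish it. The cause is that Dirichlet's formula picks up the extra factor $2/w=1/3$ when $p=3$, giving $\sum_{r=1}^{2} r\left(\frac r3\right)=-1$ instead of $-3$.

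The paper's proof has the same implicit restriction, since Lemma 2.1 is stated only for $p>3$. The right repair is to drop your claim about $p=3$ and assert the $p\equiv 3\pmod 4$ case for $p>3$ only.
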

\begin{remark} This implies parts (iii) and (iv) of \cite[Conjecture 5.3]{S23} as well as
Conjectures 13.18, 13.19 and parts (ii)-(v) of Conjecture 13.20 in the book \cite{S-book}.
\end{remark}

\begin{theorem}\label{Th1.2} Let $p>5$ be a prime, and let $\zeta$ be any primitive tenth root of unity. Then
\begin{equation}G_p(\zeta)=\begin{cases}(-1)^{|\{1\ls k\ls \f {p+9}{10}:\ (\f kp)=-1\}|}
&\t{if}\ p\eq21\pmod{40},
\\(-1)^{|\{1\ls k\ls\f {p+1}{10}:\ (\f kp)=-1\}|}\zeta^{2}&\t{if}\ p\eq 29\pmod{40}.
\end{cases}\end{equation}
\end{theorem}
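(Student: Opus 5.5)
Since $p\eq1\pmod4$ in both cases, Theorem \ref{Th1.1} fixes the argument of $G_p(\zeta)$. So the plan is to compute the modulus $|G_p(\zeta)|$ separately and then combine it with the sign from Theorem \ref{Th1.1}. We expect $|G_p(\zeta)|=1$ exactly when $p\eq 5\pmod 8$ (i.e.\ $p\eq21,29\pmod{40}$).

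\textbf{Step 1: the modulus.} Put $\zeta=e^{2\pi ia/10}$ with $a\in\{1,3,7,9\}$, and write $N(x)=\prod_{(\f rp)=-1}(x-e^{2\pi ir/p})$, so that $G_p(x)N(x)=(x^p-1)/(x-1)$.
\begin{itemize}
\item Since $p\eq1\pmod4$, $-1$ is a residue. Hence $\overline{G_p(\zeta)}=G_p(\bar\zeta)$, and $G_p$ has real coefficients.
\item Because $p\eq1\pmod{10}$, we have $\zeta^p=\zeta$, so $G_p(\zeta)N(\zeta)=1$.
\item Write $G_p(x)=\prod(x-\eta_r)$ with $\eta_r=e^{2\pi ir/p}$, and consider the field $K=\Q(\zeta_{10p})$. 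The automorphism $\sigma$ acting on $p$-th roots by $\eta\mapsto\eta^{c}$, with $c$ a nonresidue mod $p$, and fixing $\zeta$, sends $G_p(\zeta)$ to $N(\zeta)$.
\item Consequently $G_p(\zeta)$ lies in $\Q(\zeta_{10},\sqrt p)=\Q(\zeta_5,\sqrt p)$, and it is a unit.
\item Because $p\eq1\pmod 5$, $\sqrt p\notin\Q(\zeta_5)$; the only quadratic subfield of $\Q(\zeta_5)$ is $\Q(\sqrt5)$.
\end{itemize}

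To show $|G_p(\zeta)|=1$, I would compute $G_p(\zeta)\overline{G_p(\zeta)}=G_p(\zeta)G_p(\zeta^{-1})$. Using $G_p(x^{-1})=x^{-(p-1)/2}G_p(x)\cdot(\pm1)$ and the structure of $G_p(x)=\frac12\big(A(x)-\sqrt p\,B(x)\big)$ from the Gauss–Dirichlet identity $4\Phi_p(x)=A(x)^2-pB(x)^2$, we get $|G_p(\zeta)|^2=G_p(\zeta)^2\zeta^{-(p-1)/2}$ up to sign.

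The hard step is proving that this unit in $\Q(\zeta_5,\sqrt p)$ has absolute value $1$ under all embeddings. The $\sigma$-conjugate has modulus $|N(\zeta)|=1/|G_p(\zeta)|$. So I would show $|G_p(\zeta)|$ is invariant under $\sigma$. To do this, use the Galois element $\tau:\zeta\mapsto\zeta^{3}$, extended so as to fix or negate $\sqrt p$. Here $\tau$ acts on $\sqrt p$ according to $(\f{c}{p})$ for a $c\eq1\pmod p$, $c\eq3\pmod{10}$, together with the Legendre character properties. The point is that $(\f 2p)=-1$ when $p\eq5\pmod8$, so $\zeta\mapsto\zeta^{-1}$ composed appropriately swaps $G$ and $N$. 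This should yield $|G_p(\zeta)|^2=|G_p(\zeta)|^{-2}$, hence $|G_p(\zeta)|=1$. I expect this to be the main obstacle: choosing the right automorphism so that complex conjugation, up to a root of unity factor, coincides with $\sigma$. The condition mod $40$ enters exactly here, via $(\f2p)=-1$ and $(\f5p)=1$.

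\textbf{Step 2: the argument.} With $|G_p(\zeta)|=1$, Theorem \ref{Th1.1} gives
$$G_p(\zeta)=(-1)^{|\{1\ls r<ap/10:\ (\f rp)=1\}|}e^{2\pi i a(p-1)/40}.$$
\begin{itemize}
\item For $p\eq21\pmod{40}$ we have $(p-1)/40\in\frac12+\Z$. The root of unity is then $\zeta^{(p-1)/4}$ with $(p-1)/4\eq5\pmod{10}$, so it equals $-1$.
\item For $p\eq29\pmod{40}$ we have $(p-1)/4\eq7\pmod{10}$, giving $\zeta^7=-\zeta^2$.
\end{itemize}

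\textbf{Step 3: the counting.} It remains to express $(-1)^{|\{r<ap/10:(\f rp)=1\}|}$ in terms of the count of nonresidues $k\ls(p+9)/10$ or $k\ls(p+1)/10$; note that these two bounds are $\lfloor p/10\rfloor+1$ in the two cases. Symmetry under $r\mapsto p-r$ (valid since $p\eq1\pmod4$) reduces $a=9,7$ to $a=1,3$. Then I need the classical parity relations between residue counts in the intervals $(0,p/10)$ and $(0,3p/10)$. These follow from $(\f2p)=-1$, $(\f5p)=1$ and Dirichlet class-number formulas for interval sums, in the style used in \cite{S23}. Finally, the count of residues below $p/10$ plus the count of nonresidues gives $\lfloor p/10\rfloor$, which absorbs the extra sign.
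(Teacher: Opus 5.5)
Your outline (prove $|G_p(\zeta)|=1$, then take the phase from Theorem \ref{Th1.1}) can be made to work. However, Step 1 carries all the content, it is only asserted (``this should yield''), and the mechanism you suggest cannot supply it.

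In the abelian field $\Q(\zeta_{10p})$ every automorphism $\rho$ satisfies $|\rho(\alpha)|^2=\rho(|\alpha|^2)$. So showing that $|G_p(\zeta)|$ is unchanged by your $\sigma$ is the same as showing $|G_p(\zeta)|^2\in\Q(\sqrt5)$, and that is just a restatement of the claim. The Galois conjugates of $G_p(\zeta)$ are the numbers $G_p(\zeta^b)$ and $G_p^*(\zeta^b)$, with moduli $|G_p(\zeta^b)|^{\pm1}$. Nothing in the group structure constrains these values, so no choice of $\tau$, and no bookkeeping of its action on $\sqrt p$, can force them to equal $1$. In particular, $(\frac2p)=-1$ cannot enter through an automorphism. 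It enters through the substitution $x\mapsto x^2$, which sends a primitive tenth root of unity to a fifth root of unity and is not a Galois action.

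The missing ingredients are two polynomial identities:
\begin{itemize}
\item $G_p(x^2)=G_p^*(x)G_p^*(-x)$ for $p\equiv5\pmod{8}$ (Lemma \ref{Lem2.2});
\item $\prod_{j=1}^4G_p(\zeta^{2j})=\prod_{(\frac rp)=1}(\zeta_p^{5r}-1)/(\zeta_p^r-1)=1$, which needs $(\frac5p)=1$, a hypothesis your Step 1 never uses.
\end{itemize}
Write $g_k=|G_p(\zeta^k)|$. Real coefficients give $g_k=g_{10-k}$, and $|G_p(\zeta^k)G_p^*(\zeta^k)|=|(\zeta^{kp}-1)/(\zeta^k-1)|=1$ for $10\nmid k$. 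The first identity at $x=\zeta$ (note $-\zeta=\zeta^6$) gives $g_2=1/(g_1g_4)$, and the second gives $(g_2g_4)^2=1$. Hence $g_1=1$ for every primitive tenth root $\zeta$. The paper does this same computation but keeps the phases, using Theorem \ref{Th1.1} to make $G_p(\zeta^2)$ and $G_p(\zeta^6)$ real up to known roots of unity. It thereby gets $G_p(\zeta)\in\{\pm1\}$, resp.\ $G_p(\zeta)\zeta^{-2}\in\{\pm1\}$, directly.

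A smaller slip: $\zeta^p=\zeta$, $G_p(\zeta)G_p^*(\zeta)=1$ and $p\equiv1\pmod{5}$ hold only for $p\equiv21\pmod{40}$. For $p\equiv29\pmod{40}$ the product is $-\zeta^{-1}$, which is harmless for the modulus.

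Step 3 has a second, smaller hole. You invoke ``classical parity relations'' between the residue counts below $p/10$ and below $3p/10$ without proving them, and you do not need them.
\begin{itemize}
\item Once you know $G_p(\zeta_{10})=\pm1$ (resp.\ $G_p(\zeta_{10})\zeta_{10}^{-2}=\pm1$), apply the automorphism of $\Q(\zeta_{10p})$ that fixes $\zeta_p$ and sends $\zeta_{10}$ to $\zeta_{10}^a$. It fixes $\pm1$ and sends $G_p(\zeta_{10})$ to $G_p(\zeta_{10}^a)$. So the same sign serves every primitive tenth root, and only $a=1$ has to be counted.
\item For $a=1$: since $(\frac{10}p)=-1$, the integer $(p+9)/10$ (resp.\ $(p+1)/10$) is a nonresidue. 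So the number of residues below $p/10$ equals the number of residues in $[1,(p+9)/10]$ (resp.\ $[1,(p+1)/10]$).
\item These bounds are odd, so the sign $-(-1)^{(\text{number of residues})}$ coming from Theorem \ref{Th1.1} becomes $(-1)^{(\text{number of nonresidues})}$, as stated.
\end{itemize}
This is exactly how the paper finishes.
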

\begin{remark} This confirms \cite[Conjecture 5.2]{S23} (see also \cite[Conjecture 13.20(i)]{S-book}).
\end{remark}

\begin{theorem} \label{Th1.3} Let $p$ be any odd prime. Then there are $U_p(x),V_p(x)\in\Z[x]$ such that
\begin{equation}\label{GUV}G_p(x)=\f{V_p(x)+U_p(x)\sqrt{(-1)^{(p-1)/2}p}}2
\end{equation}
and
\begin{equation}\label{G*UV}G_p^*(x)=\f{V_p(x)-U_p(x)\sqrt{(-1)^{(p-1)/2}p}}2,
\end{equation}
where
\begin{equation}\label{G*}
G^*_p(x):=\prod_{r=1\atop (\f rp)=-1}^{p-1}\l(x-e^{2\pi ir/p}\r).
\end{equation}
Moreover, when $p>3$ we have
\begin{equation}\label{1/x}U_p(x^{-1})=x^{-(p-1)/2}U_p(x)
\ \ \t{and}\ \ V_p(x^{-1})=(-x)^{-(p-1)/2}V_p(x).
\end{equation}
\end{theorem}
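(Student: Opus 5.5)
Let $\zeta_p=e^{2\pi i/p}$ and $p^*=(-1)^{(p-1)/2}p$. The plan is to use Galois theory of $\Q(\zeta_p)/\Q$, whose unique quadratic subfield is $\Q(\sqrt{p^*})$ by the Gauss sum evaluation quoted in the introduction. Write $\sigma_b$ for the automorphism with $\zeta_p\mapsto\zeta_p^b$.

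\textbf{Step 1: the symmetric polynomials.} The coefficients of $G_p(x)$ and $G_p^*(x)$ are polynomials in the $\zeta_p^r$ with $(\f rp)=1$, respectively $(\f rp)=-1$, with integer coefficients. Hence they lie in $\Z[\zeta_p]$. An automorphism $\sigma_b$ with $(\f bp)=1$ permutes the quadratic residues, so it fixes $G_p$ and $G_p^*$ coefficientwise. An automorphism with $(\f bp)=-1$ swaps the two products. Consequently $S(x)=G_p(x)+G_p^*(x)$ has coefficients fixed by the whole Galois group. Since these coefficients are algebraic integers, they lie in $\Z$. Also $D(x)=G_p(x)-G_p^*(x)$ has coefficients that are fixed by the index-two subgroup and negated by the others. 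Each such coefficient $c$ therefore lies in $\Z[\f{1+\sqrt{p^*}}2]$ and satisfies $\sigma(c)=-c$ for the nontrivial automorphism $\sigma$. Writing $c=(u+v\sqrt{p^*})/2$ with $u\equiv v\pmod 2$ forces $u=0$, so $v$ is even. Thus $c=w\sqrt{p^*}$ with $w\in\Z$.

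\textbf{Step 2: forming $U_p$ and $V_p$.} Put $V_p(x)=S(x)\in\Z[x]$ and $D(x)=U_p(x)\sqrt{p^*}$ with $U_p(x)\in\Z[x]$. Solving gives $G_p=(V_p+U_p\sqrt{p^*})/2$ and $G_p^*=(V_p-U_p\sqrt{p^*})/2$, which are (\ref{GUV}) and (\ref{G*UV}). The only care needed is the integrality claim for $w$. Equivalently, one can note that $\Z[\zeta_p]\cap\Q(\sqrt{p^*})$ is the ring of integers $\Z[\f{1+\sqrt{p^*}}2]$.

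\textbf{Step 3: the reciprocity relation (\ref{1/x}).} Let $n=(p-1)/2$. Compute
$$G_p(x^{-1})=\prod_{(\f rp)=1}(x^{-1}-\zeta_p^r)=(-1)^n x^{-n}\zeta_p^{\sum r}\prod_{(\f rp)=1}(x-\zeta_p^{-r}).$$
Since $-1$ is a residue exactly when $p\eq1\pmod4$, the last product equals $G_p(x)$ in that case and $G_p^*(x)$ when $p\eq3\pmod4$. Moreover, for $p>3$ the sum of the quadratic residues is divisible by $p$: it equals $\sum_{k\le n}k^2\equiv n(n+1)(2n+1)/6\equiv0\pmod p$, because $2n+1=p$ and $p\nmid 6$. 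So $\zeta_p^{\sum r}=1$. The same computation applies to $G_p^*$.

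\emph{Case $p\eq1\pmod4$.} Here $n$ is even, so $G_p(x^{-1})=x^{-n}G_p(x)$ and $G_p^*(x^{-1})=x^{-n}G_p^*(x)$. Adding and subtracting gives $V_p(x^{-1})=x^{-n}V_p(x)$ and $U_p(x^{-1})=x^{-n}U_p(x)$, which agree with (\ref{1/x}) since $(-x)^{-n}=x^{-n}$.

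\emph{Case $p\eq3\pmod4$.} Here $n$ is odd, so $G_p(x^{-1})=-x^{-n}G_p^*(x)$. Then
$$V_p(x^{-1})=-x^{-n}V_p(x)=(-x)^{-n}V_p(x),$$
while $U_p(x^{-1})\sqrt{p^*}=G_p(x^{-1})-G_p^*(x^{-1})=-x^{-n}(G_p^*-G_p)=x^{-n}U_p(x)\sqrt{p^*}$.

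The main obstacle is this sign bookkeeping in Step 3, together with the congruence $\sum r\eq0\pmod p$. That congruence is exactly where the hypothesis $p>3$ enters.
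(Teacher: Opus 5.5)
Your proof is correct, and its skeleton matches the paper's. The coefficients lie in the ring of integers of $\mathbb{Q}(\sqrt{p^*})$, a nonresidue automorphism swaps $G_p$ and $G_p^*$, and $\zeta_p^{\sum r}=1$ gives the reciprocity relations. You organize the first part a little differently: you split into the invariant part $G_p+G_p^*$ and the anti-invariant part $G_p-G_p^*$, while the paper writes each coefficient of $G_p$ as $(a_j+b_j\sqrt{p^*})/2$ with $a_j\equiv b_j\pmod 2$ and then conjugates. These are equivalent.

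The substantive difference is the congruence $\sum_{r\in R}r\equiv0\pmod p$, where $R$ and $N$ denote the residues and nonresidues in $\{1,\ldots,p-1\}$. The paper takes this from Lemma 2.1, whose case $p\equiv3\pmod4$ rests on Dirichlet's class number formula $\sum_{r=1}^{p-1}r(\frac rp)=-ph(-p)$. Your argument is elementary and uniform in $p\bmod 4$: $\sum_{r\in R}r\equiv\sum_{k=1}^{n}k^2=p\cdot\frac{n(n+1)}6\pmod p$, with $6\mid n(n+1)$ because $\gcd(p,6)=1$. It shows the relations for $U_p(x^{-1})$ and $V_p(x^{-1})$ need no class-number input. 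The paper's route costs nothing extra there only because Lemma 2.1 is needed anyway for Theorem 1.1.

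Two small fixes:
\begin{itemize}
\item $\sum_{r\in R}r$ is congruent modulo $p$ to $\sum_{k\le n}k^2$, not equal to it.
\item For $G_p^*$, say explicitly why $\sum_{r\in N}r\equiv0\pmod p$. For example, $N\equiv bR$ for any nonresidue $b$, or subtract from $\sum_{r=1}^{p-1}r=p(p-1)/2$.
\end{itemize}
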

\begin{remark} For any odd prime $p$, we obviously have
\begin{equation} \label{GG*} G_p(x)G^*_p(x)=\prod_{r=1}^{p-1}\l(x-e^{2\pi ir/p}\r)=\f{x^p-1}{x-1}.
 \end{equation}
\end{remark}

\begin{theorem} \label{Th1.4} Let $p>3$ be a prime with $p\equiv 3\pmod4$. Then
\begin{equation}\label{S(omega)}\begin{aligned}G_p(\omega)=&\ (-1)^{(h(-p)+1)/2}\left(\frac p3\right)\frac{x_p\sqrt3-y_p\sqrt{p}}2
\\&\ \times\begin{cases}i&\text{if}\ p\equiv7\pmod{12},
\\(-1)^{|\{1\le k<\frac p3:\ (\frac kp)=1\}|}i\omega&\text{if}\ p\equiv11\pmod{12},
\end{cases}\end{aligned}\end{equation}
and
\begin{equation}\label{S(omega-)}\begin{aligned}G_p(\bar\omega)=&\ (-1)^{(h(-p)-1)/2}\left(\frac p3\right)\frac{x_p\sqrt3+y_p\sqrt{p}}2
\\&\ \times\begin{cases}i&\text{if}\ p\equiv7\pmod{12},
\\(-1)^{|\{1\le k<\frac p3:\ (\frac kp)=1\}|}i\bar\omega&\text{if}\ p\equiv11\pmod{12},
\end{cases}\end{aligned}\end{equation}
where $(x_p,y_p)$ is the least positive integer solution to the diophantine equation
$$3x^2+4\left(\frac p3\right)=py^2.$$
\end{theorem}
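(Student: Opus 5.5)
The plan is to combine the structural decomposition of Theorem~\ref{Th1.3} with Galois theory in the biquadratic field $K=\Q(\sqrt{-3},\sqrt{-p})$, and then to fix the remaining sign (and the factor $\omega$ when $p\eq11\pmod{12}$) by Theorem~\ref{Th1.1} with $m=3$ and $a=1,2$.

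\textbf{Step 1 (the product of $G_p(\omega)$ and $G^*_p(\omega)$).} By (\ref{GUV}) with $p\eq3\pmod4$, we have $G_p(\omega)=(V+U\sqrt{-p})/2$ and $G^*_p(\omega)=(V-U\sqrt{-p})/2$, where $U=U_p(\omega)$ and $V=V_p(\omega)$ lie in $\Z[\omega]$. By (\ref{GG*}),
$$G_p(\omega)G^*_p(\omega)=\f{\omega^p-1}{\omega-1},$$
which equals $1$ if $p\eq1\pmod3$ and $1+\omega=-\omega^2$ if $p\eq2\pmod3$. Hence $G_p(\omega)$ is a unit of $K$.

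\textbf{Step 2 (the product of $G_p(\omega)$ and $G_p(\bar\omega)$).} Writing $\zeta=e^{2\pi i/p}$ and using $x^2+x+1=(x-\omega)(x-\bar\omega)$,
$$G_p(\omega)G_p(\bar\omega)=\prod_{(\f rp)=1}\l(1+\zeta^r+\zeta^{2r}\r)=\prod_{(\f rp)=1}\f{\zeta^{3r}-1}{\zeta^r-1}.$$
I will evaluate this root of unity according to whether $(\f 3p)=1$ or $-1$; in the latter case it becomes a ratio $G_p(1)/G^*_p(1)$, which is the classical quantity giving $h(-p)$.

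\textbf{Step 3 (reduction to a real unit).} The relation (\ref{1/x}) gives
$$G_p(\bar\omega)=\f{(-\omega)^{(p-1)/2}V+\omega^{(p-1)/2}U\sqrt{-p}}2 .$$
Combining this with complex conjugation (which sends $\omega\mapsto\bar\omega$ and $\sqrt{-p}\mapsto-\sqrt{-p}$, hence $\overline{G_p(\omega)}=G^*_p(\bar\omega)$), I will show that $\eta:=G_p(\omega)/(ic)$, with $c=1$ for $p\eq7\pmod{12}$ and $c=\omega$ for $p\eq11\pmod{12}$, is fixed by complex conjugation. It therefore lies in the real subfield $\Q(\sqrt3,\sqrt p)$ and has the shape $(x\sqrt3-y\sqrt p)/2$. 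Steps 1 and 2 then give
$$\f{3x^2-py^2}4=-\l(\f p3\r),$$
so $(|x|,|y|)$ is a solution of $3x^2+4(\f p3)=py^2$.

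\textbf{Step 4 (the main obstacle).} One must show that $(|x|,|y|)$ is the \emph{least} solution $(x_p,y_p)$ and not a larger one. I would first note that $((x\sqrt3+y\sqrt p)/2)^2$ is a unit of $\Q(\sqrt{3p})$. I would then express $\eta^2$ or $\eta/\bar\eta$ through cyclotomic units $\prod(1-\zeta^r\omega)$, and compare with Dirichlet's class number formula for $\Q(\sqrt{3p})$ and $\Q(\sqrt{-p})$, or alternatively bound $|G_p(\omega)|$ directly. This would force the exponent to be $1$. If that fails, I would instead use the $2$-adic and $3$-adic structure of the norm equation to show that non-fundamental solutions cannot have the required form.

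\textbf{Step 5 (sign and conjugate).} Finally, the overall sign $(-1)^{(h(-p)+1)/2}(\f p3)$, together with the extra factor $(-1)^{|\{1\ls k<p/3:\ (\f kp)=1\}|}$ in the case $p\eq11\pmod{12}$, comes from the positivity statement of Theorem~\ref{Th1.1} with $m=3$ and $a=1$. Formula (\ref{S(omega-)}) follows in the same way with $a=2$, or by applying (\ref{1/x}) to the result for $\omega$.
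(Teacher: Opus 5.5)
Your Steps 1, 3 and 5 follow the paper's route: the decomposition of Theorem~\ref{Th1.3}, the reciprocity \eqref{1/x} making $U_p(\omega)$ and $iV_p(\omega)$ real up to a power of $\omega$, the norm equation from $G_p(\omega)G_p^*(\omega)$, and the sign from Theorem~\ref{Th1.1}. The gap is Step 4, the only non-routine step. It is a list of strategies rather than an argument, and the main strategy cannot succeed.

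Dirichlet's class number formula for $\Q(\sqrt{3p})$, used as in the paper's lemma \eqref{G-2}, gives $|G_p(\omega)|^{-2}=\varepsilon_{3p}^{h(3p)}$. So your Step 3 solution satisfies $\bigl((|x|\sqrt3+|y|\sqrt p)/2\bigr)^2=\varepsilon_{3p}^{h(3p)}$.

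Now take any positive solution and set $\beta=(x\sqrt3+y\sqrt p)/2$. Then $\beta^2=\varepsilon_{3p}^m$ with $m$ odd, because $\beta\in\sqrt3\,\Q(\sqrt{3p})$ does not lie in $\Q(\sqrt{3p})$. Moreover $\beta\varepsilon_{3p}^{-(m-1)/2}$ again has this shape, since a unit times the generator $(3x+y\sqrt{3p})/2$ of the prime above $3$ is another generator. Hence the least solution has $\beta_p^2=\varepsilon_{3p}$.

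So the exponent you want to force to be $1$ is exactly $h(3p)$, and $(|x|,|y|)=(x_p,y_p)$ holds if and only if $h(3p)=1$. This already fails for $p=107$. There $(x_p,y_p)=(12,2)$ and $\beta_p^2=215+12\sqrt{321}=\varepsilon_{321}$. But $h(321)=3$: the reduced forms of discriminant $321$ fall into six cycles and $N(\varepsilon_{321})=1$. Consequently $|G_{107}(\omega)|=\beta_p^{-3}$, and the pair that actually occurs is $(5148,862)$. No $2$-adic or $3$-adic refinement can rescue Step 4, because the claim it is meant to prove is false in general.

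The paper does not fill this step either. It derives $m=h(3p)$ from a ``Sinnott--Hasse index theorem'', but that theorem only restates that the cyclotomic unit is $\varepsilon_{3p}^{h(3p)}$. Knowing $\varepsilon_{3p}^{h(3p)}\in\langle\varepsilon_{3p}^m\rangle$ gives only $m\mid h(3p)$, and in fact $m=1$. What your Steps 1--3, together with \eqref{G-2} and Theorem~\ref{Th1.1}, do establish is the stated formula with $(x_p\sqrt3\mp y_p\sqrt p)/2$ replaced by its $h(3p)$-th power. That coincides with the statement exactly when $h(3p)=1$.

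Step 5 has two further omissions:
\begin{itemize}
\item For $p\equiv7\pmod{12}$, Theorem~\ref{Th1.1} leaves the factor $(-1)^{|\{1\le k<p/3:\ (\frac kp)=1\}|}$. Removing it needs Berndt's evaluation of $\sum_{r\le(p-1)/3}(\frac rp)$, i.e.\ the paper's identity \eqref{even}.
\item Showing that $x$ and $y$ have the same sign requires $|G_p(\omega)|<1$, which again comes from \eqref{G-2}.
\end{itemize}
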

\begin{remark} This confirms \cite[Conjecture 5.1]{S23}.
\end{remark}

As mentioned in \cite[Remark 5.2]{S23}, Theorem \ref{Th1.4}, together with \cite[Lemma 4.4]{S23},
yields the following result.

\begin{corollary} Let $p>3$ be a prime with $p\eq3\pmod4$. Then
\begin{align}G_p(-\omega)=\begin{cases}\omega&\t{if}\ p\eq11\pmod{24},
\\1&\t{if}\ p\eq 19\pmod{24},
\\(\f 3p)\omega^{(1+(\f 3p))/2}(x_p\sqrt3+y_p\sqrt p)^2/4&\t{if}\ p\eq 7\pmod{8},
\end{cases}\end{align}
where $x_p$ and $y_p$ are defined as in Theorem \ref{Th1.4}.
\end{corollary}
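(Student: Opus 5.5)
The natural route is a duplication identity linking $G_p(-x)$ to $G_p(x)$ and $G_p(x^2)$; I expect this is essentially the content of \cite[Lemma 4.4]{S23}. Put $\zeta_p=e^{2\pi i/p}$. For each quadratic residue $r$ we have $(x-\zeta_p^r)(-x-\zeta_p^r)=-(x^2-\zeta_p^{2r})$. Since $p\eq3\pmod4$, taking the product over the $(p-1)/2$ residues gives
$$G_p(x)G_p(-x)=(-1)^{(p-1)/2}\prod_{(\f rp)=1}\l(x^2-\zeta_p^{2r}\r)=-\begin{cases}G_p(x^2)&\t{if}\ (\f 2p)=1,\\ G_p^*(x^2)&\t{if}\ (\f 2p)=-1.\end{cases}$$
With $x=\omega$ we have $x^2=\bar\omega$, so $G_p(-\omega)=-G_p(\bar\omega)/G_p(\omega)$ when $p\eq7\pmod8$, and $G_p(-\omega)=-G_p^*(\bar\omega)/G_p(\omega)$ when $p\eq3\pmod8$.

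\emph{Case $p\eq7\pmod8$.} Divide \eqref{S(omega-)} by \eqref{S(omega)}. The factor $(\f p3)$ cancels, and the two signs $(-1)^{(h(-p)\mp1)/2}$ differ by $-1$. The factors $i$ cancel when $p\eq7\pmod{12}$. When $p\eq11\pmod{12}$ the factor $(-1)^{|\{\cdots\}|}$ also cancels, leaving $\bar\omega/\omega=\omega$. The remaining quotient is $(x_p\sqrt3+y_p\sqrt p)/(x_p\sqrt3-y_p\sqrt p)$. Since $3x_p^2-py_p^2=-4(\f p3)$, this quotient equals $-(\f p3)(x_p\sqrt3+y_p\sqrt p)^2/4$. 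For the prime $p\eq7\pmod8$ quadratic reciprocity gives $(\f p3)=-(\f 3p)$. Collecting the signs, including the leading minus from the duplication identity, should give $(\f 3p)\,\omega^{\epsilon}(x_p\sqrt3+y_p\sqrt p)^2/4$. Here $\epsilon=1$ exactly when $p\eq2\pmod3$, i.e.\ when $(\f 3p)=1$ in this residue class, which is the exponent $(1+(\f 3p))/2$.

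\emph{Case $p\eq3\pmod8$.} Use \eqref{GG*} at $\bar\omega$: $G_p^*(\bar\omega)=(\bar\omega^p-1)/((\bar\omega-1)G_p(\bar\omega))$. Hence $G_p(-\omega)=-(\bar\omega^p-1)/\big((\bar\omega-1)G_p(\omega)G_p(\bar\omega)\big)$. Multiplying \eqref{S(omega)} and \eqref{S(omega-)}, the Diophantine identity gives $\f{x_p\sqrt3-y_p\sqrt p}2\cdot\f{x_p\sqrt3+y_p\sqrt p}2=-(\f p3)$. The $h(-p)$ signs multiply to $-1$, and $(\f p3)^3=(\f p3)$. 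The remaining factor is $i^2=-1$ when $p\eq1\pmod3$, and $i^2\omega\bar\omega=-1$ when $p\eq2\pmod3$, where the $(-1)^{|\{\cdots\}|}$ factor squares away. So $G_p(\omega)G_p(\bar\omega)$ is an explicit $\pm1$. If $p\eq1\pmod3$ then $\bar\omega^p=\bar\omega$, the quotient $(\bar\omega^p-1)/(\bar\omega-1)$ is $1$, and we should get $G_p(-\omega)=1$ for $p\eq19\pmod{24}$. If $p\eq2\pmod3$ then $\bar\omega^p=\omega$, and $(\omega-1)/(\bar\omega-1)=-\omega$; this should give $\omega$ for $p\eq11\pmod{24}$.

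\emph{Main obstacle.} The only delicate step is the sign bookkeeping in each residue class mod $24$: the leading $-1$, the $h(-p)$ parities, $(\f p3)$ versus $(\f 3p)$, and the $i$ and $\omega$ factors. I would verify the final signs numerically for $p=7,11,19,23$ to guard against slips.
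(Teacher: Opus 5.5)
Your proposal is correct and matches the paper's route. The paper obtains this corollary in one line by combining Theorem \ref{Th1.4} with Lemma 4.4 of \cite{S23}, and your duplication identity $G_p(x)G_p(-x)=-G_p(x^2)$ or $-G_p^*(x^2)$, according as $(\frac 2p)=1$ or $-1$, plays exactly that role. Your sign bookkeeping also closes up to the three stated values. A minor remark: the cases $p\equiv 11,19\pmod{24}$ do not actually need Theorem \ref{Th1.4}. There you only use $G_p(\omega)G_p(\bar\omega)=-(\frac p3)$, which follows directly from \eqref{Gx-1} and \eqref{GG*}, as at the end of the paper's proof of Theorem \ref{Th1.4}.
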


We will prove Theorems \ref{Th1.1} and \ref{Th1.2} in the next section.
Section 3 is devoted to our proofs of Theorems \ref{Th1.3} and \ref{Th1.4}.

\section{Proofs of Theorems \ref{Th1.1} and \ref{Th1.2}}
\setcounter{equation}{0}
 \setcounter{conjecture}{0}
 \setcounter{theorem}{0}
 \setcounter{proposition}{0}

\begin{lemma} \label{Lem2.1} Let $p>3$ be a prime, and let $R=\{1\ls r\ls p-1:\ (\f rp)=1\}|$. Then
\begin{equation}\label{r/p}\sum_{r\in R}\f rp =\begin{cases}(p-1)/4&\t{if}\ p\eq1\pmod4,
\\(p+1)/4-(h(-p)+1)/2&\t{if}\ p\eq3\pmod4.\end{cases}
\end{equation}
\end{lemma}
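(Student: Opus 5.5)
For $p\eq1\pmod4$ I would use the symmetry $r\mapsto p-r$. Since $(\f{-1}p)=1$, this map sends $R$ to itself. Pairing $r$ with $p-r$ gives
$$2\sum_{r\in R}r=\sum_{r\in R}\bigl(r+(p-r)\bigr)=p|R|=p\cdot\f{p-1}2,$$
so $\sum_{r\in R}r/p=(p-1)/4$, as claimed.

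For $p\eq3\pmod4$ I would bring in the classical class number formula (Dirichlet), valid for primes $p>3$ with $p\eq3\pmod4$:
$$h(-p)=-\f1p\sum_{r=1}^{p-1}\l(\f rp\r)r .$$
Write $S_+=\sum_{r\in R}r$ and $S_-=\sum_{1\ls r\ls p-1,\ (\f rp)=-1}r$. Then
$$S_++S_-=\f{p(p-1)}2\qquad\text{and}\qquad S_+-S_-=-p\,h(-p).$$
Adding these and dividing by $2$ gives $S_+=p(p-1)/4-p\,h(-p)/2$, that is,
$$\sum_{r\in R}\f rp=\f{p-1}4-\f{h(-p)}2=\f{p+1}4-\f{h(-p)+1}2,$$
which matches \eqref{r/p}. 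The hypothesis $p>3$ is needed exactly here: for $p=3$ the formula acquires the extra factor coming from $w=6$ units, which is why the lemma excludes $p=3$.

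The only nontrivial input is this class number formula. It is standard (see Ireland and Rosen, or Borevich and Shafarevich), so I would quote it rather than reprove it. If a derivation is wanted, it follows from $L(1,\chi)=\pi h/\sqrt p$ for $\chi=(\f{\cdot}p)$, combined with the finite Fourier expansion of $\sum_n \chi(n)/n$ via the Gauss sum $\sqrt{-p}$. I expect the main obstacle to be checking the sign convention, i.e. that $\sum_{r=1}^{p-1}(\f rp)r$ is negative. Two quick checks confirm it. For $p=7$ the residues are $1,2,4$, so the sum is $7-(3+5+6)=-7$, giving $h=1$. For $p=11$ the residues are $1,3,4,5,9$, so the sum is $22-33=-11$, giving $h(-11)=1$. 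Both agree with the formula.
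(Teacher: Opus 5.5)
Your proof is correct and follows the paper's argument: the $r\mapsto p-r$ pairing when $p\equiv1\pmod4$, and, when $p\equiv3\pmod4$, Dirichlet's formula $\sum_{r=1}^{p-1}r(\frac rp)=-p\,h(-p)$ combined with $\sum_{r=1}^{p-1}r=p(p-1)/2$. Your remark about $p=3$ is also right: $w=6$ there, and the identity genuinely fails, since the left side is $1/3$ while the right side is $0$.
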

\Proof. Recall that $(\f{-1}p)=(-1)^{(p-1)/2}$ by the theory of quadratic residues modulo primes.

Suppose that $p\eq 1\pmod 4$. Then, for any $r\in\{1,\ldots,p-1\}$, we have $(\f {p-r}p)=(\f rp)$, and hence
$r\in R$ if and only if $p-r\in R$.
Thus
$$\sum_{r\in R}r=\sum_{r\in R}(p-r)=p|R|-\sum_{r\in R}r$$
and hence
$$\sum_{r\in R}r=\f p2|R|=\f{p(p-1)}4$$
as desired.

Now assume that $p\eq3\pmod4$. Then
$$2\sum_{r\in R}r=\sum_{r=1}^{p-1}r\l(1+\l(\f rp\r)\r)=\f{p(p-1)}2+\sum_{r=1}^{p-1}r\l(\f rp\r).$$
By Dirichlet's class number formula,
$$\sum_{r=1}^{p-1}r\l(\f rp\r)=-ph(-p).$$
Therefore
$$\f2p\sum_{r\in R}r=\f{p-1}2-h(-p)$$
and hence
$$\sum_{r\in R}\f rp=\f{p-1}4-\f{h(-p)}2=\f{p+1}4-\f{h(-p)+1}2$$
as desired.

In view of the above, we have completed the proof of Lemma \ref{Lem2.1}. \qed

\medskip
\noindent{\it Proof of Theorem \ref{Th1.1}}. Set $\zeta_n=e^{2\pi i/n}$ for $n\in\Z^+$.
For any $r\in\{1,\ldots,p-1\}$, we have
\begin{align*}\zeta_m^a-\zeta_p^r&=e^{i\pi(a/m+r/p)}\l(e^{i\pi (a/m-r/p)}-e^{i\pi(r/p-a/m)}\r)
\\&=e^{i\pi(a/m+r/p)}2i\sin \pi\l(\f am-\f rp\r).
\end{align*}
Thus
\begin{align*}G_p(\zeta_m^a)&=\prod_{r=1\atop(\f rp)=1}^{p-1}(\zeta_m^a-\zeta_p^r)
\\&=e^{i\pi(\f{p-1}2\cdot \f am+\sum_{r\in R}\f rp)}(2i)^{(p-1)/2}\prod_{r=1\atop (\f rp)=1}^{p-1}\sin\pi\l(\f am-\f rp\r),
\end{align*}
where $R=\{1\ls r\ls p-1:\ (\f rp)=1\}$. Note that
$$(-1)^{|\{\f{ap}m<r<p:\ (\f rp)=1\}|}\prod_{r=1\atop (\f rp)=1}^{p-1}\sin\pi\l(\f am-\f rp\r)
=\prod_{r=1\atop (\f rp)=1}^{p-1}\sin\pi\l|\f am-\f rp\r|>0.$$
Therefore
\begin{align*}&\ G_p(\zeta_m^a)i^{(p-1)/2}e^{-i\pi(\f{p-1}2\cdot\f am+\sum_{r\in R}\f rp)}(-1)^{|\{1\ls r<\f{ap}m:\ (\f rp)=1\}|}
\\=&\ 2^{(p-1)/2}\prod_{r=1\atop (\f rp)=1}^{p-1}\sin\pi\l|\f am-\f rp\r|
\\>&\ 0.
\end{align*}

If $p\eq1\pmod4$, then by Lemma \ref{Lem2.1} we have
$$i^{(p-1)/2}e^{-i\pi\sum_{r\in R}\f rp}=(-1)^{(p-1)/4}e^{-i\pi\f{p-1}4}=1.$$
If $p\eq3\pmod4$, then
by Lemma \ref{Lem2.1} we have
$$i^{(p-1)/2}e^{-i\pi\sum_{r\in R}\f rp}=\f{(i^2)^{(p+1)/4}}ie^{-i\pi(\f{p+1}4-\f{h(-p)+1}2)}=i(-1)^{(h(-p)-1)/2}.$$

Combining the above, we immediately obtained the desired result.
This completes the proof of Theorem \ref{Th1.1}. \qed

\begin{lemma} \label{Lem2.2} For any prime $p\eq5\pmod8$, we have
$G_p(x)\in\R[x]$ and
\begin{equation}\label{G(x^2)} G_p(x^2)=G_p^*(x)G_p^*(-x).
\end{equation}
\end{lemma}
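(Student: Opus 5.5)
Both claims come from how the set of quadratic residues behaves under $r\mapsto -r$ and $r\mapsto 2r$. Since $p\eq5\pmod8$, we have $p\eq1\pmod4$, so $(\f{-1}p)=1$, and $(\f 2p)=-1$.

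For the reality of $G_p$, we use $(\f{-1}p)=1$: as $r$ runs over the quadratic residues modulo $p$, so does $p-r$. Hence the multiset of roots $\{\zeta_p^r:(\f rp)=1\}$ is closed under complex conjugation $\zeta_p^r\mapsto\zeta_p^{-r}$. The monic polynomial $G_p(x)$ therefore equals its coefficientwise conjugate, so $G_p(x)\in\R[x]$. The same argument gives $G_p^*(x)\in\R[x]$, though we will not need this.

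For \eqref{G(x^2)}, we compute the right-hand side directly:
$$G_p^*(x)G_p^*(-x)=\prod_{(\f sp)=-1}(x-\zeta_p^s)(-x-\zeta_p^s)=\prod_{(\f sp)=-1}(\zeta_p^{2s}-x^2),$$
where $s$ runs over the $(p-1)/2$ nonresidues in $\{1,\ldots,p-1\}$. Rewriting each factor as $-(x^2-\zeta_p^{2s})$ gives
$$G_p^*(x)G_p^*(-x)=(-1)^{(p-1)/2}\prod_{(\f sp)=-1}(x^2-\zeta_p^{2s}).$$
Since $(p-1)/2$ is even, the sign is $+1$. Because $(\f 2p)=-1$, the map $s\mapsto 2s\bmod p$ is a bijection from the nonresidues onto the residues. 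Thus $\{\zeta_p^{2s}:(\f sp)=-1\}=\{\zeta_p^r:(\f rp)=1\}$, and the product equals $G_p(x^2)$.

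There is no real obstacle here. The one point needing care is the sign $(-1)^{(p-1)/2}$, which is where $p\eq1\pmod4$ enters a second time, together with the fact that $(\f 2p)=-1$ holds exactly when $p\eq\pm3\pmod8$.
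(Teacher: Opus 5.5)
Your proof is correct and takes essentially the same approach as the paper. Reality follows from $\left(\frac{-1}{p}\right)=1$: pairing $r$ with $p-r$, which is the same as your closure under conjugation. The identity follows from the sign $(-1)^{(p-1)/2}=1$ together with $\left(\frac{2}{p}\right)=-1$, which makes $s\mapsto 2s$ map the nonresidues onto the residues.
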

\Proof. Let $\zeta_p=e^{2\pi i/p}$. As $(\f{-1}p)=1$, we have
$$G_p(x)=\prod_{r=1\atop(\f rp)=1}^{(p-1)/2}(x-\zeta_p^r)(x-\zeta_p^{p-r})
=\prod_{r=1\atop (\f rp)=1}^{(p-1)/2}\l(x^2-(\zeta^r+\zeta^{-r})x+1\r)\in\R[x].$$

Since $(\f 2p)=-1$ and
$|\{1\ls r\ls p-1:\ (\f rp)=-1\}|=\f{p-1}2\eq0\pmod2,$
 we have
$$G_p(x^2)=\prod_{r=1\atop (\f rp)=-1}^{p-1}(x^2-\zeta_p^{2r})
=\prod_{r=1\atop (\f rp)=-1}^{p-1}(x-\zeta_p^r)(x+\zeta_p^r)=G_p^*(x)G_p^*(-x).$$

In view of the above, we have proved the desired result. \qed

\medskip
\noindent{\it Proof of Theorem \ref{Th1.2}}. As $\zeta^5=-1$, by \eqref{G(x^2)} we have
\begin{equation}\label{216} G_p(\zeta^2)=G_p^*(\zeta)G_p^*(\zeta^6).
\end{equation}
Let $\zeta_{10}=e^{2\pi i/10}=e^{\pi i/5}$ and write $\zeta=\zeta_{10}^a$ with $a\in\{1,3,7,9\}$. By Theorem \ref{Th1.1},
\begin{equation}\label{G>0}
G_p(\zeta)\zeta^{-(p-1)/4}(-1)^{|\{1\ls r<\f{ap}{10}:\ (\f rp)=1\}|}>0,
\end{equation}
and also
\begin{equation}\label{R}G_p(\zeta^2)(\zeta^2)^{-(p-1)/4}\in\R
\ \ \t{and}\ \ \ G_p(\zeta^6)(\zeta^6)^{-(p-1)/4}\in\R.
\end{equation}
Obverse that
\begin{equation}\label{j14}\prod_{j=1}^4 G_p(\zeta^{2j})=\prod_{r=1\atop (\f rp)=1}^{p-1}
\prod_{j=1}^4(\zeta_p^r-e^{2\pi i j/5})
=\prod_{r=1\atop (\f rp)=1}^{p-1}\f{\zeta_p^{5r}-1}{\zeta_p^r-1}=1
\end{equation}
with the aid of the fact $(\f 5p)=(\f p5)=1$.

(i) We first handle the case $p\eq21\pmod{40}$.
As $p\eq1\pmod{10}$, we have $\zeta^p=\zeta$. In view of \eqref{GG*},
$$G_p(\zeta)G_p^*(\zeta)=\f{\zeta^p-1}{\zeta-1}=1
\ \t{and}\ G_p(\zeta^6)G_p^*(\zeta^6)=\f{\zeta^{6p}-1}{\zeta^6-1}=1.$$
Combining this with \eqref{216}, we obtain
\begin{equation}\label{G126}G_p(\zeta^2)G_p(\zeta)G_p(\zeta^6)=G_p^*(\zeta)G_p^*(\zeta^6)G_p(\zeta)G_p(\zeta^6)=1.
\end{equation}

 Note that $\zeta^{(p-1)/2}=1$ since $p\eq1\pmod{20}$.
So, by \eqref{R}, both $G_p(\zeta^2)$ and $G_p(\zeta^6)$ are real.
Recall that $G_p(x)\in \R[x]$ by Lemma \ref{Lem2.1}.
We now have
$$G_p(\zeta^2)=\overline{G_p(\zeta^2)}=G_p(\overline{\zeta^2})=G_p(\zeta^8)$$
and
$$G_p(\zeta^6)=\overline{G_p(\zeta^6)}=G_p(\overline{\zeta^6})=G_p(\zeta^4).$$
Therefore
\begin{align*}\l(G_p(\zeta^2)G_p(\zeta^6)\r)^2&=\prod_{j=1}^4 G_p(\zeta^{2j})=1
\end{align*}
in view of \eqref{j14}.
It follows that $G_p(\zeta^2)G_p(\zeta^6)\in\{\pm1\}$.
Combining this with \eqref{G126}, we get $G_p(\zeta)\in\{\pm1\}$.

As $(p-1)/4\eq 5\pmod{10}$, we have
 $\zeta^{-(p-1)/4}=\zeta^5=-1$ and hence
$$G_p(\zeta)(-1)^{|\{1\ls r<\f{ap}{10}:\ (\f rp)=1\}|}<0$$
by \eqref{G>0}. Since $G(\zeta)\in\{\pm1\}$, we must have
$$G_p(\zeta)=-(-1)^{|\{1\ls r<\f{ap}{10}:\ (\f rp)=1\}|}.$$
Note that
$$\l(\f{(p+9)/10}p\r)=\l(\f{10}p\r)=\l(\f2p\r)\l(\f 5p\r)=-1.$$
So we have
\begin{align*}G_p(\zeta_{10})&=-(-1)^{|\{1\ls r\ls \f{p+9}{10}:\ (\f rp)=1\}|}
=-(-1)^{\f{p+9}{10}-|\{1\ls r\ls \f{p+9}{10}:\ (\f rp)=-1\}|}
\\&=(-1)^{|\{1\ls r\ls \f{p+9}{10}:\ (\f rp)=-1\}|}.
\end{align*}

Let $\sigma_a$ be the element of the Galois group $\Q(\zeta_{10p})/\Q$
with
$$\sigma_a(e^{2\pi i(\f s{10}+\f rp)})=e^{2\pi i(\f{as}{10}+\f rp)}$$
for all $r,s\in\Z$. Applying the Galois automorphism $\sigma_{10}$ to the equality
$$G_p(\zeta_{10})=(-1)^{|\{1\ls r\ls \f{p+9}{10}:\ (\f rp)=-1\}|},$$
we obtain that
$$G_p(\zeta)=G_p(\zeta_{10}^a)=\sigma_a(G_p(\zeta_{10}))=(-1)^{|\{1\ls r\ls \f{p+9}{10}:\ (\f rp)=-1\}|}.$$

(ii) Now we handle the case $p\eq29\pmod{40}$.
As $p\eq9\pmod{10}$, we have $\zeta^p=\zeta^9=-\zeta^4$. In view of \eqref{GG*},
$$G_p(\zeta)G_p^*(\zeta)=\f{\zeta^p-1}{\zeta-1}=\f{\zeta^{-1}-1}{\zeta-1}=-\zeta^{-1}=\zeta^4$$
and
$$G_p(\zeta^6)G_p^*(\zeta^6)=\f{\zeta^{6p}-1}{\zeta^6-1}=-\zeta^{-6}=-\zeta^4.$$
Combining this with \eqref{216}, we obtain
\begin{equation}\label{G'126}G_p(\zeta^2)G_p(\zeta)G_p(\zeta^6)
=G_p^*(\zeta)G_p^*(\zeta^6)G_p(\zeta)G_p(\zeta^6)=-\zeta^8=\zeta^3.
\end{equation}

Note that $-(p-1)/4\eq3\pmod{10}$.
So, by \eqref{R}, both $G_p(\zeta^2)\zeta^6$ and $G_p(\zeta^6)\zeta^8$ are real.
Recall that $G_p(x)\in \R[x]$ by Lemma \ref{Lem2.1}.
We now have
$$G_p(\zeta^2)\zeta^6=\overline{G_p(\zeta^2)\zeta^6}=G_p(\zeta^8)\zeta^4$$
and
$$G_p(\zeta^6)\zeta^8=\overline{G_p(\zeta^6)\zeta^8}=G_p(\zeta^4)\zeta^2.$$
Hence
$$G_p(\zeta^2)=G_p(\zeta^8)\zeta^8\ \t{and}\ \ G_p(\zeta^6)=G_p(\zeta^4)\zeta^4.$$
Therefore
\begin{align*}\l(G_p(\zeta^2)G_p(\zeta^6)\r)^2\zeta^{-2}=\prod_{j=1}^4 G_p(\zeta^{2j})=1
\end{align*}
with the aid of \eqref{j14}.
It follows that $G_p(\zeta^2)G_p(\zeta^6)\zeta^{-1}\in\{\pm1\}$.
Combining this with \eqref{G'126}, we get $G_p(\zeta)/\zeta^2\in\{\pm1\}$.

As $-(p-1)/4\eq3\pmod{10}$, by \eqref{G>0} we have
$$G_p(\zeta)\zeta^3(-1)^{|\{1\ls r<\f{ap}{10}:\ (\f rp)=1\}|}$$
and hence
$$G_p(\zeta)\zeta^{-2}(-1)^{|\{1\ls r<\f{ap}{10}:\ (\f rp)=1\}|}<0.$$
Since $G_p(\zeta)\zeta^{-2}\in\{\pm1\}$, we must have
$$G_p(\zeta)\zeta^{-2}=-(-1)^{|\{1\ls r<\f{ap}{10}:\ (\f rp)=1\}|}.$$
In particular,
$$G_p(\zeta_{10})\zeta_{10}^{-2}=(-1)^{\f{p+1}{10}}(-1)^{|\{1\ls r\ls\f{p+1}{10}:\ (\f rp)=1\}|}.$$
(Note that $\f{p+1}{10}$ is a quadratic residue modulo $p$ since $(\f{10}p)=(\f 2p)(\f 5p)=-1$.)
Thus
$$G_p(\zeta_{10})\zeta_{10}^{-2}=(-1)^{|\{1\ls r\ls\f{p+1}{10}:\ (\f rp)=-1\}|}.$$
Applying the Galois automorphism $\sigma_a$ to this equality, we get
$$G_p(\zeta)\zeta^{-2}=(-1)^{|\{1\ls r\ls\f{p+1}{10}:\ (\f rp)=-1\}|}$$
as desired.

In view of the above, we have completed the proof of Theorem \ref{Th1.2}. \qed

\section{Proofs of Theorems \ref{Th1.3} and \ref{Th1.4}}
\setcounter{equation}{0}
 \setcounter{conjecture}{0}
 \setcounter{theorem}{0}
 \setcounter{proposition}{0}

\medskip
\noindent{\it Proof of Theorem \ref{Th1.3}}. Let $\zeta_p=e^{2\pi i/p}$.
The Galois group $G$ of the field extension $\Q(\zeta_p)/\Q$
consists of those $\sigma_a$ with $1\ls a\le p-1$ for which $\sigma_a(\zeta_p)=\zeta_p^a$.
Clearly,
 $G_p(x)=\sum_{j=0}^{(p-1)/2}c_jx^{(p-1)/2-j}$,
 where $c_0=1$ and
 $$c_j=(-1)^j\sum_{1\ls k_1<\ldots<k_j\ls(p-1)/2}\zeta_p^{k_1^2+\cdots+k_j^2}$$
 for all $j=1,\ldots,(p-1)/2$.
 For any $1\ls a\ls p-1$ with $(\f ap)=1$, clearly
 $$\prod_{k=1}^{(p-1)/2}(x-\zeta_p^{ak^2})=\prod_{r=1\atop (\f rp)=1}^{p-1}(x-\zeta_p^r)=G_p(x)$$
 and hence
 $$\sigma_a(c_j)=(-1)^j\sum_{1\ls k_1<\ldots<k_j\ls(p-1)/2}\zeta_p^{a(k_1^2+\cdots+k_j^2)}=c_j$$
 for all $0\ls j\ls (p-1)/2$. Thus
 $c_0,\ldots,c_{(p-1)/2}$ belong to the field
 $$M=\mathrm{Inv}(H)=\{\al\in \Q(\zeta_p):\ \sigma(\al)=\al\ \t{for all}\ \sigma\in H\},$$
 where $H=\{\sigma_a:\ 1\ls a\ls p-1\ \t{and}\ (\f ap)=1\}$
 is a subgroup of $G$ with $[G:H]=2$. By the Fundamental Theorem of Galois Theory, $\mathrm{Gal}(\Q(\zeta_p)/M)=H$ and
 $$[M:\Q]=|\mathrm{Gal}(M/\Q)|=|\mathrm{Gal}(\Q(\zeta_p)/\Q)/\mathrm{Gal}(\Q(\zeta_p)/M)|=|G/H|=2.$$

 By a well-known result on quadratic Gauss sums (cf. \cite[pp.\,70-76]{IR}, for each $a=1\,\ldots,p-1$
  we have
  $$\l(\f ap\r)\sum_{x=0}^{p-1}\zeta_p^{ax^2}=\sqrt{p'},$$
 where $p'=(-1)^{(p-1)/2}p$. Thus $\sqrt{p'}\in\Q(\zeta_p)$, and for any $1\ls a\ls p-1$ with $(\f ap)=1$ we have
 $$\sigma_a(\sqrt{p'})=\sum_{x=0}^{p-1}\zeta_p^{ax^2}=\sqrt{p'}.$$
 Therefore, $\sqrt{p'}\in\mathrm{Inv}(H)=M$. As $[M:\Q]=2$, we must have $M=\Q(\sqrt{p'})$.
 Note that $p'\eq1\pmod4$. It is well known that the ring $O_M$ of algebraic integers in $M$ consists of those numbers
 $(v+u\sqrt{p'})/2$ with $u,v\in\Z$ and $u\eq v\pmod2$.
 As $c_0,\ldots,c_{(p-1)/2}\in O_M$, we see that $G_p(x)=(V_p(x)+U_p(x)\sqrt{p'})/2$ for some
 $U_p(x),V_p(x)\in \Z[x]$.

 Now, let $a\in\{1,\ldots,p-1\}$ with $(\f ap)=-1$. Then
 $$\sum_{j=0}^{(p-1)/2}\sigma_a(c_j)x^{(p-1)/2-j}=\prod_{k=1}^{(p-1)/2}\l(x-\zeta_p^{ak^2}\r)
 =G_p^*(x).$$
 Write $c_j\in O_M$ as $(a_j+b_j\sqrt{p'})/2$ with $a_j,b_j\in\Z$ and $a_j\eq b_j\pmod2$.
 Since
 $$\sigma_a(\sqrt{p'})=\sum_{r=0}^{p-1}\zeta_p^{ar^2}=\l(\f ap\r)\sum_{r=0}^{p-1}\zeta_p^{r^2}=-\sqrt{-p},$$
 we have
 $\sigma(c_j)=(a_j-b_j\sqrt{p'})/2$. Thus
 $$G_p^*(x)=\sum_{j=0}^{(p-1)/2}\f{a_j-b_j\sqrt{p'}}2x^{(p-1)/2-j}=\f{V_p(x)-U_p(x)\sqrt{p'}}2.$$

 Now assume that $p>3$. Let
 $$R=\l\{1\ls r\ls p-1:\ \l(\f rp\r)=1\r\}\
 \t{and}\ N=\l\{1\ls r\ls p-1:\ \l(\f rp\r)=-1\r\}.$$
 Then
 $$\sum_{r\in R}r+\sum_{r\in N}r=\sum_{r=1}^{p-1}r=\f{p(p-1)}2\eq0\pmod p.$$
 Combining this with Lemma \ref{Lem2.1}, we get that
 $$\sum_{r\in R}r\eq0\eq\sum_{r\in N}r\pmod{p}.$$
 Hence
 \begin{equation}\label{Gx-1}G_p(x^{-1})=\prod_{r\in R}\l(-x^{-1}\zeta_p^r(x-\zeta_p^{-r})\r)=(-x)^{-(p-1)/2}\prod_{r\in R}(x-\zeta_p^{-r})
 \end{equation}
 and
 $$G_p^*(x^{-1})=\prod_{r\in N}\l(-x^{-1}\zeta_p^r(x-\zeta_p^{-r})\r)=(-x)^{-(p-1)/2}\prod_{r\in N}(x-\zeta_p^{-r}).$$
 Thus
 \begin{align*}V_p(x^{-1})&=G_p(x^{-1})+G_p^*(x^{-1})
 \\&=(-x)^{-(p-1)/2}\l(\prod_{r\in R}(x-\zeta_p^{-r})+\prod_{r\in N}(x-\zeta_p^{-r})\r)
 \\&=(-x)^{-(p-1)/2}(G_p(x)+G_p^*(x))
 \end{align*}
 and
 \begin{align*}U_p(x^{-1})&=\f{G_p(x^{-1})-G_p^*(x^{-1})}{\sqrt{p'}}
 \\&=\f{(-x)^{-(p-1)/2}}{\sqrt{p'}}\l(\prod_{r\in R}(x-\zeta_p^{-r})-\prod_{r\in N}(x-\zeta_p^{-r})\r)
 \\&=\f{(-x)^{-(p-1)/2}}{\sqrt{p'}}\l(\f {-1}p\r) (G_p(x)-G_p^*(x))
 =x^{-(p-1)/2}U_p(x).
 \end{align*}

 In view of the above, we have completed the proof of Theorem \ref{Th1.3}. \qed

 \begin{lemma} Let $p>3$ be a prime with $p\eq3\pmod4$. Then
 \begin{equation}\label{G-2}|G_p(\omega)|^{-2}=\ve_{3p}^{h(3p)},
 \end{equation}
 where $\ve_{3p}$ and $h(3p)$ are the fundamental unit and class number of the real
 quadratic field $\Q(\sqrt{3p})$, respectively.
 \end{lemma}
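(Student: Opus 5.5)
The plan is to express $|G_p(\omega)|$ through the analytic class number formula for the real quadratic field $\Q(\sqrt{3p})$. Since $p\eq3\pmod4$, we have $3p\eq1\pmod4$, so the discriminant of $\Q(\sqrt{3p})$ is $D=3p$. Its Kronecker character is $\chi(a)=(\f{3p}a)$, and for $a$ coprime to $3p$ reciprocity gives $\chi(a)=(\f a3)(\f ap)$. I will use Dirichlet's class number formula in the product form
$$\prod_{a=1\atop (a,3p)=1}^{3p-1}\l|1-\zeta_{3p}^a\r|^{\chi(a)}=\ve_{3p}^{-2h(3p)},$$
which is equivalent to $h(3p)\log\ve_{3p}=-\f12\sum_a\chi(a)\log|1-\zeta_{3p}^a|$. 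Here $\zeta_n=e^{2\pi i/n}$.

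\emph{Rewriting the factors.} Write $\omega=\zeta_{3p}^{p}$ and $\zeta_p^r=\zeta_{3p}^{3r}$. Then $|\omega-\zeta_p^r|=|1-\zeta_{3p}^{3r-p}|$. As $r$ runs over $1,\ldots,p-1$, the residues $3r-p$ and $3r+p$ modulo $3p$ together run exactly once over all $a$ coprime to $3p$: those with $a\eq -p$ and those with $a\eq p\pmod 3$, respectively.

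\emph{Evaluating $\chi$.} For $a\eq3r-p\pmod{3p}$ we get
$$\chi(a)=\l(\f{-p}3\r)\l(\f 3p\r)\l(\f rp\r)=-\l(\f p3\r)\l(\f3p\r)\l(\f rp\r)=\l(\f rp\r),$$
because quadratic reciprocity gives $(\f p3)(\f 3p)=(-1)^{(p-1)/2}=-1$. Similarly, for $a\eq 3r+p$ we get $\chi(a)=-(\f rp)$.

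\emph{Combining the two halves.} Use $|1-\zeta_{3p}^{3r+p}|=|1-\zeta_{3p}^{3(p-r)-p}|$ together with $(\f{p-r}p)=-(\f rp)$, which holds since $p\eq3\pmod4$. This shows that the $3r+p$ half of the product equals the $3r-p$ half. Hence
$$\ve_{3p}^{-2h(3p)}=\prod_{r=1}^{p-1}|\omega-\zeta_p^r|^{2(\f rp)}=\f{|G_p(\omega)|^2}{|G^*_p(\omega)|^2}.$$

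\emph{Eliminating $G_p^*$.} By \eqref{GG*} we have $|G_p(\omega)G_p^*(\omega)|=|(\omega^p-1)/(\omega-1)|=1$, since $p\not\eq0\pmod3$. Therefore $|G_p^*(\omega)|^{-2}=|G_p(\omega)|^2$, and so $|G_p(\omega)|^4=\ve_{3p}^{-2h(3p)}$. Taking positive square roots yields \eqref{G-2}.

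The main obstacle is the character bookkeeping. One has to pin down the correct normalization of the class number formula, including the sign convention and the fact that $\ve_{3p}>1$. One also has to check carefully that the two halves $a\eq\mp p\pmod 3$ pair up with matching signs. The rest is routine.
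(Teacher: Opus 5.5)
Your proof is correct and takes essentially the same route as the paper. Both apply Dirichlet's class number formula for $\mathbb{Q}(\sqrt{3p})$ in product form, split the residues modulo $3p$ by their class modulo $3$, and use quadratic reciprocity to show the character equals $\pm(\frac rp)$. Both then combine the two halves into $\prod_{r=1}^{p-1}|\omega-\zeta_p^r|^{2(\frac rp)}=|G_p(\omega)|^2/|G_p^*(\omega)|^2$ and remove $G_p^*(\omega)$ via $|G_p(\omega)G_p^*(\omega)|=|(\omega^p-1)/(\omega-1)|=1$. The only difference is cosmetic: you take absolute values from the start and match the halves via $r\mapsto p-r$, while the paper keeps the complex factors and matches them by complex conjugation.
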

 \Proof. Note that $m=3p\eq1\pmod4$. By Dirichlet's class number formula (cf. \cite[p.\,344]{BS}),
 $$\prod_{r=1\atop (r,m)=1}^m(1-e^{2\pi i r/m})^{(\f rm)}=\ve_m^{-2h(m)}.$$
 Thus,
 \begin{align*}\ve_{3p}^{-2h(3p)}&=\prod_{r=1}^{p-1}\prod_{s=1}^2\l(1-e^{2\pi i(\f rp+\f s3)}\r)^{(\f{3r+ ps}{3p})}
\\&=\prod_{r=1}^{p-1}(1-\omega\zeta_p^r)^{(\f p3)(\f{3r}p)}(1-\omega^2\zeta_p^r)^{(\f{2p}3)(\f{3r}p)}
\\&=\prod_{r=1}^{p-1}(1-\omega\zeta_p^r)^{-(\f rp)}(1-\omega^2\zeta_p^r)^{(\f rp)}
\\&=\omega^{-\sum_{r=1}^{p-1}(\f rp)}(\omega^2-\zeta_p^r)^{-(\f rp)}
\times(\omega^2)^{\sum_{r=1}^{p-1}(\f rp)}\prod_{r=1}^{p-1}(\omega-\zeta_p^r)^{(\f rp)}
\\&=\prod_{r=1}^{p-1}(\omega-\zeta_p^r)^{(\f rp)}
\times\prod_{t=1}^{p-1}(\omega^2-\zeta_p^{p-t})^{-(\f{p-t}p)}
\\&=\prod_{r=1}^{p-1}(\omega-\zeta_p^r)^{(\f rp)}(\omega^{-1}-\zeta_p^{-r})^{(\f rp)}
=\prod_{r=1}^{p-1}|\omega-\zeta_p^r|^{2(\f rp)}
\end{align*}
and hence
\begin{align*}\ve_{3p}^{h(3p)}&=\prod_{r=1}^{p-1}|\omega-\zeta_p^r|^{-(\f rp)}
\\&=\f{\prod_{r\in R}|\omega-\zeta_p^r|\times\prod_{r\in N}|(\omega-\zeta_p^r)}
{\prod_{r\in R}|\omega-\zeta_p^r|^2}
\\&=\f{|G_p(\omega)G_p^*(\omega)|}{|G_p(\omega)|^2}.
\end{align*}
In light of \eqref{GG*},
 \begin{equation}\label{G*G}G_p(\omega)G_p^*(\omega)=\f{\omega^p-1}{\omega-1}=\f{\omega^{(\f p3)}-1}{\omega-1}=(i\omega)^{1-(\f p3)}.
 \end{equation}
 Therefore \eqref{G-2} holds. \qed
 
 \begin{lemma} Let $p>3$ be a prime with $p\eq3\pmod4$. Then
 \begin{equation}\label{even}\l|\l\{1\ls r\ls \l\lfloor\f {p-1}3\r\rfloor:\ \l(\f rp\r)=1\r\}\r|=\f{3+(\f p3)}4h(-p)+\f12\l\lfloor\f{p-1}3\r\rfloor.
 \end{equation}
\end{lemma}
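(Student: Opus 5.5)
Write $n=\lfloor (p-1)/3\rfloor$ and $S=\sum_{r=1}^{n}(\f rp)$. Since $p\nmid r$ for $1\ls r\ls n$, the number of quadratic residues in $\{1,\ldots,n\}$ is $(n+S)/2$. So \eqref{even} is equivalent to
$$S=\f{3+(\f p3)}2\,h(-p),$$
and the plan is to prove this by a Gauss--Eisenstein style rearrangement of Dirichlet's class number formula. We use that formula in the form already invoked in the proof of Lemma \ref{Lem2.1}: $A:=\sum_{r=1}^{p-1}r(\f rp)=-ph(-p)$.

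First, we record the symmetry information. Because $p\eq3\pmod4$, we have $(\f{p-r}p)=-(\f rp)$. Hence the sum of $(\f rp)$ over $2p/3<r<p$ equals $-S$. Let $T$ be the sum of $(\f rp)$ over $p/3<r<2p/3$. Since $\sum_{r=1}^{p-1}(\f rp)=0$, we get $S+T-S=0$, so $T=0$. Consequently
$$\sum_{r=1}^{p-1}\l(\f rp\r)\l\lfloor\f{3r}p\r\rfloor=T+2(-S)=-2S.$$

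Next, we evaluate $A$ a second way by substituting $r\mapsto 3r$ modulo $p$. As $r$ runs over $1,\ldots,p-1$, so does the least positive residue $3r-p\lfloor 3r/p\rfloor$. Therefore
$$A=\sum_{r=1}^{p-1}\l(\f{3r}p\r)\l(3r-p\l\lfloor\f{3r}p\r\rfloor\r)=\l(\f3p\r)\l(3A+2pS\r).$$
Solving for $S$ gives $2pS=(\f3p)A-3A$, that is,
$$S=\f{3-(\f3p)}2\,h(-p).$$

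Finally, quadratic reciprocity with $p\eq3\pmod4$ gives $(\f3p)=-(\f p3)$. This yields the displayed value of $S$, and hence \eqref{even}.

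I expect no serious obstacle. The only delicate points are the bookkeeping of $\lfloor 3r/p\rfloor$ on the three subintervals, and the fact that $p/3$ and $2p/3$ are never integers since $p>3$. This is exactly what makes the interval split clean and lets $n=\lfloor (p-1)/3\rfloor$ coincide with the count of $r<p/3$.
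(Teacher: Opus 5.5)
Your proof is correct, but at the key step it takes a different route from the paper. The paper does not prove the character-sum identity itself. It quotes Berndt's evaluation $\sum_{r=1}^{\lfloor (p-1)/3\rfloor}(\frac{r}{p})=(3-(\frac{3}{p}))h(-p)/2$ and rewrites $3-(\frac 3p)$ as $3+(\frac p3)$ by reciprocity. It then passes from the character sum to the number of residues exactly as in your first paragraph.

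You instead derive Berndt's formula directly from Dirichlet's class number formula $\sum_{r=1}^{p-1}r(\frac rp)=-ph(-p)$, which is valid here since $p>3$ and is the same input the paper already uses in Lemma \ref{Lem2.1}. Your steps are as follows, and each one checks out:
\begin{itemize}
\item The odd symmetry $(\frac{p-r}{p})=-(\frac rp)$ makes the middle-third sum vanish and the last-third sum equal to $-S$, so $\sum_r(\frac rp)\lfloor 3r/p\rfloor=-2S$.
\item The permutation $r\mapsto 3r\bmod p$ then gives $A=(\frac 3p)(3A+2pS)$.
\item Hence $S=(3-(\frac3p))h(-p)/2$.
\end{itemize}

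Your version makes the lemma self-contained apart from the class number formula, at the cost of a few extra lines. The paper's citation is shorter, and it points to a source that handles character sums over many other fractional intervals. Your multiplication trick closes up so neatly for the modulus $3$ because the symmetry leaves only one unknown interval sum. For other moduli it produces linear relations among several such sums, and these would have to be supplemented by further relations.
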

\Proof. By Berndt \cite{Ber}, 
$$\sum_{r=1}^{\lfloor(p-1)/3\rfloor}\l(\f rp\r)=\l(3-\l(\f 3p\r)\r)\f{h(-p)}2=\l(3+\l(\f p3\r)\r)\f{h(-p)}2.$$
Thus
$$2\l|\l\{1\ls r\ls\l\lfloor\f{p-1}3\r\rfloor:\ \l(\f rp\r)=1\r\}\r|-\l\lfloor\f{p-1}3\r\rfloor
=\l(3+\l(\f p3\r)\r)\f{h(-p)}2$$
and hence \eqref{even} holds. \qed

 \medskip
 \noindent{\it Proof of Theorem \ref{Th1.4}}. By Theorem \ref{Th1.3}, there are $U_p(x),V_p(x)\in\mathbb Z[x]$ with $U_p(x^{-1})=x^{-(p-1)/2}U_p(x)$ and
 $V_p(x^{-1})=-x^{-(p-1)/2}V_p(x)$ such that
 $$G_p(x)=\f{V_p(x)+U_p(x)\sqrt{-p}}2
 \ \ \t{and}\ \ G_p^*(x)=\f{V_p(x)-U_p(x)\sqrt{-p}}2.$$
 Note that
 \begin{equation}\label{p/3}\f{V_p(\omega)^2+pU_p(\omega)^2}4=G_p(\omega)G_p^*(\omega)
 =(i\omega)^{1-(\f p3)}
 \end{equation}
 by \eqref{G*G}.

 As $p\eq(\f p3)\pmod 6$, we see that
 $$\omega^{-\f{p-1}2}=\omega^{-\f{p-(\f p3)}2+\f{1-(\f p3)}2}=\omega^{\f{1-(\f p3)}2}.$$
 Thus, by \eqref{1/x} we have
 $$\overline{U_p(\omega)}=U_p(\omega^{-1})=\omega^{(1-(\f p3))/2}U_p(\omega)$$
 and
 $$\overline{V_p(\omega)}=V_p(\omega^{-1})=-\omega^{(1-(\f p3))/2}V_p(\omega).$$
 So, if $p\eq1\pmod3$, then both $U_p(\omega)$ and $iV_p(\omega)$ are real numbers.
 Similarly, when $p\eq2\pmod3$, we have
 $$\overline{\omega^2U_p(\omega)}=\omega\overline{U_p(\omega)}=\omega^2U_p(\omega)
 \ \t{and}\
 \overline{i\omega^2V_p(\omega)}=-i\omega\overline{V_p(\omega)}=i\omega^2V_p(\omega),$$
 and hence both $\omega^{-1}U_p(\omega)$ and $i\omega^{-1}V_p(\omega)$ are real numbers.
Note that both $U_p(\omega)$ and $V_p(\omega)$ lie in the ring
 $$\Z[\omega]=\{a+b\omega:\ a,b\in\Z\}=\l\{a-\f b2+\f b2\sqrt3 i:\ a,b\in\Z\r\}.$$
 For $z=a+b\omega$ with $a,b\in\Z$, if $iz$ is real then $a=b/2$ and $z=a\sqrt3 i\in\sqrt{-3}\Z.$

 {\it Case} 1. $p\eq1\pmod3$.

 In this case, as $U_p(\omega)$ and $iV_p(\omega)$ are real numbers,
 we have $U_p(\omega)\in\Z$ and $V_p(\omega)\in\sqrt{-3}\Z$ by the above.
 In light of \eqref{p/3},
 $$3\l(\f{V_p(\omega)}{\sqrt{-3}}\r)^2+4=pU_p(\omega)^2.$$

 {\it Case} 2. $p\eq2\pmod3$.

 In this case, as both $\omega^{-1}U_p(\omega)$ and $i\omega^{-1}V_p(\omega)$ are real numbers, we have $U_p(\omega)/\omega\in\Z$ and $V_p(\omega)/\omega\in\sqrt{-3}\Z$.
 In light of \eqref{p/3},
 $$3\l(\f{V_p(\omega)}{\sqrt{-3}\omega}\r)^2-4=p\l(\f{U_p(\omega)}{\omega}\r)^2.$$

In either case,
$$x_0=\f{V_p(\omega)}{\sqrt{-3}}\omega^{1-(\f p3)}\in\Z
\ \t{and}\ y_0=-U_p(\omega)\omega^{1-(\f p3)}\in\Z,$$
and moreover $3x_0^2+4(\f p3)=py_0^2$. Clearly, both $x_0$ and $y_0$ are nonzero. Observe that
\begin{align*}G_p(\omega)&=\f{V_p(\omega)+U_p(\omega)\sqrt{-p}}2
=\omega^{(\f p3)-1}\f{x_0\sqrt{-3}-y_0\sqrt{-p}}2
\\&=i\omega^{(\f p3)-1}\f{x_0\sqrt3-y_0\sqrt p}2.
\end{align*}
As
$$(e^{-\pi i/3})^{(p-1)/2}=(-\omega)^{(p-1)/2}=-\omega^{((\f p3)-1)/2}=-\omega^{1-(\f p3)},$$
by Theorem \ref{Th1.1} we have
$$\f{x_0\sqrt3-y_0\sqrt p}2\da=G_p(\omega)(-i)\omega^{1-(\f p3)}\da>0$$
where
$$\da=(-1)^{\f{h(-p)-1}2+|\{1\ls r<\f p3:\ (\f rp)=1\}|}.$$
Let $x_1=-\da(\f p3) x_0$ and $y_1=-\da(\f p3)y_0$. Then
\begin{equation}
\label{x1y1}\l(\f p3\r)(y_1\sqrt p-x_1\sqrt3)>0\ \ \t{and}\ \ 3x_1^2+4\l(\f p3\r)=py_1^2.
\end{equation} Also,
\begin{equation}\label{G1}G_p(\omega)=-\da i\omega^{(\f p3)-1}\l(\f p3\r)\f{x_1\sqrt3-y_1\sqrt p}2
\end{equation}
and
$$|G_p(\omega)|^2=G_p(\omega)\overline{G_p(\omega)}=\l(\f{x_1\sqrt3-y_1\sqrt p}2\r)^2
=\l(\f{x_1\sqrt3+y_1\sqrt p}2\r)^{-2}.$$

In view of \eqref{G-2},
$$\l(\f{x_1\sqrt3+y_1\sqrt p}2\r)^2=|G_p(\omega)|^{-2}=\ve_{3p}^{h(3p)}>1$$
 and hence
$$\l|\f2{x_1\sqrt3-y_1\sqrt p}\r|=\l|\f{x_1\sqrt3+y_1\sqrt p}2\r|>1.$$
Thus we must have $x_1y_1>0$, and hence $x_1,y_1\in\Z^+$ in light of \eqref{x1y1}.
Note that $\beta_1=(x_1\sqrt3+y_1\sqrt p)/2>1$ and $\beta_1^2=\ve_{3p}^{h(3p)}$.

As $3$ divides the discriminant $3p$ of the field $K=\Q(3p)$, the prime $3$ ramifies and hence
$(3)=P^2$ for some prime ideal $P$ of $O_K$ (which is the ring of algebraic integers in $K$).
Note that $N_{K/\Q}(P)^2=3^2$ and hence $N_{K/\Q}(P)=3$.
For integers $x,y\in\Z$ with $x\eq y\pmod2$, clearly $3x^2+4(\f p3)=py^2$ if and only if
$N_{K/\Q}(\al)=-3(\f p3)$, where $\al=(3x+y\sqrt{3p})/2\in O_K$.
If $\al\in O_K$ with $N_{K/\Q}(\al)|=3=N(P)$, then we must have $P=(\al)=\al O_K$.
As $\al_1=(3x_1+y_1\sqrt{3p})/2\in O_K$ and $|N_{K/\Q}(\al_1)|=3$, we have $P=(\al_1)$.

Let $(x_p,y_p)$ be the smallest positive integer solution to the diophantine equation
$3x^2+4(\f p3)=py^2$. Then $x_p\ls x_1$ and $y_p\ls y_1$.
Also, $\al_p=(3x_p+y_p\sqrt{3p})/2>1$ is a generator of the prime ideal $P$.
For
$$\beta_p=\f{\al_p}{\sqrt3}=\f{x_p\sqrt3+y_p\sqrt p}2>1,$$
we have
$$\beta_p^2=\f{3x_p^2+py_p^2+2x_py_p\sqrt{3p}}4=\f{py_p^2-2(\f p3)+x_py_p\sqrt{3p}}2\in O_K$$
and
$$N_{K/\Q}(\beta_p^2)=\f{py_p^2(py_p^2-4(\f p3))+4-3px_p^2y_p^2}{4}=1.$$
So, $\beta_p^2=\ve_{3p}^m$ for some positive integer $m$. As $\beta_p^2\ls \beta_1^2=\ve_{3p}^{h(3p)}$,
we have $m\ls h(3p)$. As $\al_1\gs \al_p$ and $(\al_1)=P=(\al_p)$, $\al_1/\al_p$
is a unit in $O_K$ not smaller than one. Thus, $\al_1/\al_p=\ve_{3p}^k$ for some $k\in\N$.
Hence
$$\f{\beta_1^2}{\beta_p^2}=\l(\f{\al_1/\sqrt3}{\al_p/\sqrt3}\r)^2=\ve_{3p}^{2k}$$
and
$$\ve_{3p}^{h(3p)}=\beta_1^2=\beta_p^2\ve_{3p}^{-2k}=\ve_{3p}^{m+2k}.$$
Thus $m$ is odd since $h(3p)$ is known to be odd (cf. \cite{RR}).
Since $\ve_{3p}^m=\beta_p^2=\al_p^2/3$ generates a subgroup of the unit group containing the cyclotomic unit $\ve_{3p}^{h(p)}=\beta_1^2=\al_1^2/3$, and $(\al_p)=(\al_1)=P$,
by the Sinnott-Hasse Index Theorem  for cyclotomic units in real quadratic fields, $m$ cannot be smaller than $h(3p)$. Therefore, $m=h(3p)$ and $\al_1=\al_p$. Thus $x_1=x_p$ and $y_1=y_p$.
Combining this with \eqref{G1}, we obtain
$$G_p(\omega)=(-1)^{\f{h(-p)+1}2+|\{1\ls r<\f p3:\ (\f rp)=1\}|}i\omega^{(\f p3)-1}\l(\f p3\r)\f{x_p\sqrt3-y_p\sqrt p}2.$$
When $p\eq7\pmod{12}$, by \eqref{even} we have
$$\l|\l\{1\ls r\ls\f{p-1}3:\ \l(\f rp\r)=1\r\}\r|=h(-p)+\f{p-1}6\eq0\pmod2$$
since $h(-p)$ is odd. So 
\eqref{S(omega)} holds no matter $(\f p3)$ is $1$ or $-1$.

By \eqref{Gx-1} and \eqref{p/3},
$$G(\omega^2)=(-\omega)^{(p-1)/2}G_p^*(\omega)=-\omega^{(\f p3)-1}G_p^*(\omega)$$
and $G_p(\omega)G_p^*(\omega)=(i\omega)^{1-(\f p3)}.$
Thus
$$G_p(\omega^2)=-\omega^{(\f p3)-1}\f{(i\omega)^{1-(\f p3)}}{G_p(\omega)}=-\l(\f p3\r)G_p(\omega)^{-1}.$$
Note also that
$$\l(-\l(\f p3\r)\f{x_p\sqrt3-y_p\sqrt p}2\r)^{-1}=\f{x_p\sqrt3+y_p\sqrt p}2.$$
So we can easily deduce \eqref{S(omega-)} from \eqref{S(omega)}.
This ends our proof of Theorem \ref{Th1.4}. \qed

\end{document}